\newtheorem{lemma}{Lemma}
\newtheorem{thm}{Theorem}
\newtheorem{cor}{Corollary}
\newtheorem{conj}{Conjecture}
\newcommand{\fq}{\mathbb{F}_q}
\newcommand{\fqn}{\mathbb{F}_{q^n}}
\newcommand{\F}{\mathbb{F}}
\newcommand{\rmv}[1]{}
\begin{document}

\title[Complete permutation monomials]{Further results on complete permutation monomials over finite fields}

\author[Feng]{Xiutao Feng}
\address{Key Laboratory of Mathematics Mechanization, Academy of Mathematics and Systems Science, Chinese Academy of Sciences, Beijing 100190, China}
\email{fengxt@amss.ac.cn.}

\author[Lin]{Dongdai Lin}
\address{State Key Laboratory of Information Security, Institute of Information Engineering, Chinese Academy of Sciences, Beijing 100093, China.}
\email{ddlin@iie.ac.cn}

\author[Wang]{Liping Wang}
\address{State Key Laboratory of Information Security, Institute of Information Engineering, Chinese Academy of Sciences, Beijing 100093, China}
\email{wangliping@iie.ac.cn}

\author[Wang]{Qiang Wang}
\thanks{The research of authors  is partially supported by National Natural Science Foundation of China (No. 61379139), National Natural Science Foundation of China (No. 61572491), Foundation Science Center project of the National Natural Science Foundation of China (No. 11688101), and  NSERC of Canada.}
\address{School of Mathematics and Statistics,
Carleton University, Ottawa, Ontario, $K1S$ $5B6$, CANADA}

\email{wang@math.carleton.ca}

\keywords{\noindent finite fields, permutation polynomials, complete permutation polynomials, monomials} \subjclass[2010]{11T06, 05A05,11T55, 94B25}

\begin{abstract}
In this paper, we construct  some new classes of complete permutation monomials with exponent $d=\frac{q^n-1}{q-1}$ using AGW criterion (a special case).  This  proves two recent conjectures in \cite{Wuetal2} and extends some of these recent results to more general $n$'s. 
\end{abstract}

\maketitle

 Let $q = p^k$ be the power of a prime number $p$, $\F_q$ be a finite field with $q$ elements, and  $\F_q[x]$ be the ring of polynomials over $\F_q$. 
 We call $f(x) \in \F_q[x]$ a {\em permutation polynomial} (PP) of $\F_q$ 
 if  $f$ induces a permutation of $\F_q$. 
An {\em exceptional polynomial} of $\F_q$ is a polynomial $f\in \F_q[x]$ which is a permutation polynomial over $\F_{q^m}$ for infinitely many $m$. It is well known that a permutation polynomial over $\F_q$ of degree at most $q^{1/4}$  is exceptional over $\F_q$. For more background material on permutation
polynomials we refer to Chap. 7 of \cite{LN:97}. For a detailed survey of open questions and recent results see \cite{Hou:15}, \cite{LM:88},
\cite{LM:93},  \cite{Mullen:93}, and \cite{MullenWang:12}.

A complete permutation polynomial (CPP) is a polynomial $f(x)$ satisfies that both $f(x)$ and $f(x)+x$ induce bijections of $\fq$.  
CPPs have recently become a strong source of interest due to their connection to combinatorial objects such as orthogonal Latin squares 
and due to their applications in cryptography; in particular, in the construction of bent functions \cite{nyberg,  ribic, stanica}.
See also \cite{evans, tu, tuxanidy, wu and lin} and the references therein for some recent work in the area.

The most studied class of CPPs are monomials.  If there exists a complete permutation monomial of degree $d$ over $\fq$, then $d$ is called a CPP exponent over $\fq$.  Earlier work has been done recently in \cite{Bartolietal1, Bartolietal2, BZ1, BZ2, Maetal, Wuetal1}.  
Thesel recent papers concentrated on classifying monomials $f(x)=a^{-1}x^d$ with special $d$ that are  CPPs.  In particular,  $d=  \frac{q^n-1}{q-1}+1$ implies that $(d, q^n-1) =1$ and thus   $x^d$ is always a PP of $\F_{q^n}$.   Essentially, this problem  is equivalent to classifying  permutation binomial of the form $x^d + ax$.   The main focus of the paper is therefore to determine when certain polynomials of the form $f(x):=x^{\frac{q^n-1}{q-1}+1}+bx$  are  permutation binomials over $\fqn$. We remark earlier references on permutation binomials can be found in \cite{Hou:15a, MullenWang:12} and reference therein.

One of useful criterions in studying PPs of finite fields is the following AGW criterion. It first appeared in  \cite{AGW:11} and further developed  in \cite{YuanDing:11, YuanDing:14}, among others.

\begin{lemma}[AGW criterion]\label{mainlemma}
Let $A$, $S$ and $\bar{S}$ be finite sets with $\# S = \# \bar{S}$, and let $f:A\rightarrow A$, $\bar{f}:S\rightarrow \bar{S}$, $\lambda:A\rightarrow S$, and
$\bar{\lambda}:A\rightarrow \bar{S}$  be maps such that $\bar{\lambda} \circ f=\bar{f} \circ\lambda$.

\[
\xymatrix{
A \ar[r]^{P}\ar[d]^{\lambda} & A \ar[d]^{\bar{\lambda}}\\
S \ar[r]^{\bar{P}} & \bar{S} }
\]

If both $\lambda$ and $\bar{\lambda}$ are surjective, then the following statements are equivalent:
\begin{enumerate}
\item[(i)] $f$ is a bijection (a permutation of $A$); and
\item[(ii)] $\bar{f}$ is a bijection from $S$ to $\bar{S}$  and $f$ is injective on $\lambda^{-1}(s)$ for each $s\in S$.
\end{enumerate}
\end{lemma}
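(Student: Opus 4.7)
The plan is to prove the two directions of the equivalence separately, exploiting the commutativity $\bar{\lambda}\circ f=\bar{f}\circ\lambda$ together with the surjectivity hypotheses and the finiteness of $A$, so that a self-map of $A$ is bijective if and only if it is injective if and only if it is surjective. The proof is essentially a diagram chase.

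For (i)$\Rightarrow$(ii), assume $f$ permutes $A$. Injectivity of $f$ on each fiber $\lambda^{-1}(s)$ is immediate, since $f$ is globally injective. For $\bar{f}$, I would first verify surjectivity: given any $\bar{s}\in\bar{S}$, surjectivity of $\bar{\lambda}$ produces $a\in A$ with $\bar{\lambda}(a)=\bar{s}$, and surjectivity of $f$ lets me write $a=f(a')$; then the commutative square gives $\bar{f}(\lambda(a'))=\bar{\lambda}(f(a'))=\bar{\lambda}(a)=\bar{s}$. The assumption $\#S=\#\bar{S}$ then upgrades surjectivity of $\bar{f}:S\to\bar{S}$ to bijectivity.

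For (ii)$\Rightarrow$(i), suppose $\bar{f}:S\to\bar{S}$ is a bijection and $f$ is injective on each fiber $\lambda^{-1}(s)$. Since $A$ is finite, it suffices to check that $f$ is injective. If $f(a_1)=f(a_2)$, applying $\bar{\lambda}$ and using commutativity yields $\bar{f}(\lambda(a_1))=\bar{\lambda}(f(a_1))=\bar{\lambda}(f(a_2))=\bar{f}(\lambda(a_2))$; injectivity of $\bar{f}$ forces $\lambda(a_1)=\lambda(a_2)=:s$, so $a_1,a_2\in\lambda^{-1}(s)$, on which $f$ is injective by hypothesis, giving $a_1=a_2$.

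I do not foresee any real obstacle. The only subtle bookkeeping points are that the equicardinality hypothesis $\#S=\#\bar{S}$ is exactly what is needed to pass from a one-sided invertibility statement for $\bar{f}$ (surjective in one direction, injective in the other) to bijectivity, and that finiteness of $A$ is what permits us to deduce full bijectivity of $f$ from injectivity alone in the converse direction; both pigeonhole observations are the reason the cardinality condition must be listed as a hypothesis of the lemma.
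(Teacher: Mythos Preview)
Your proof is correct and complete; the diagram chase is exactly the standard argument. Note, however, that the paper does not actually prove this lemma: it merely states the AGW criterion and cites \cite{AGW:11} (and the follow-ups \cite{YuanDing:11, YuanDing:14}) for the proof, so there is no in-paper argument against which to compare your approach.
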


We remark that  $P(x)$ in Lemma~\ref{mainlemma} can be viewed piecewisely.  Namely,  let $S=\{s_0, s_1, \ldots, s_{\ell-1}\}$, then we have

$$
P(x) = 
\left\{
\begin{array}{ll} 
P_0(x), &  if~ x \in C_0 = \lambda^{-1}(s_0); \\
\vdots &  \vdots \\ 
P_i (x),   &   if ~x \in C_i = \lambda^{-1}(s_i) ; \\
\vdots & \vdots \\
P_{\ell -1}(x),  &  if~ x \in C_{\ell-1} = \lambda^{-1}(s_{\ell-1}),
\end{array}
\right.
$$ 
is a bijection if and only if   each $P_i$ is injective on $C_i$ for $0\leq i \leq \ell-1$ and $\bar{P}$ is a bijection.

In particular, if we take $A  =  \fq^* = < \gamma>$,  $\ell s = q-1$,  $\zeta = \gamma^s$, and  $\lambda=\bar{\lambda} = x^s$, then we obtain the following 
diagram.

\[
\xymatrix{
\fq^* \ar[r]^{P}\ar[d]^{x^s} & \fq^* \ar[d]^{x^s}\\
S=\{1, \zeta, \ldots, \zeta^{\ell-1} \} \ar[r]^{\bar{P}} & S = \{1, \zeta, \ldots, \zeta^{\ell-1} \}}
\]

This special case of AGW criterion was obtained earlier in \cite{NW:05, Wang}.  Namely,  the cyclotomic mapping polynomial 
$$
P(x) = 
\left\{
\begin{array}{ll} 
A_0 x^r, &  if~ x \in C_0 =<\gamma^\ell> \leq \fq^* = <\gamma>; \\
\vdots &  \vdots \\ 
A_i x^r,   &   if ~x \in C_i = \gamma^i C_0; \\
\vdots & \vdots \\
A_{\ell -1} x^r,  &  if~ x \in C_{\ell-1} = \gamma^{\ell-1} C_0,
\end{array}
\right.
$$ 
 is a PP of $\fq$ if and only if $(r, s) =1$ and $\bar{P}$ permutes $S= \{1, \zeta, \ldots, \zeta^{\ell-1} \}$. 

Rewriting it in terms of polynomials,  this criterion essentially is the following useful criterion appeared in \cite{LeePark:97, Wang, Zieve:09} in different forms.

\begin{cor}[Park-Lee 2001, Wang 2007, Zieve 2009]\label{lemma}
Let  $q-1 = \ell s$ for some positive
integers $\ell$ and $s$.  Then $P(x) = x^r f(x^s)$ is a PP of $\mathbb{F}_q$  if and 
only if   $(r, s) =1$ and $x^r f(x)^s$ permutes the set $\mu_{\ell}$ of all distinct $\ell$-th roots of unity.
\end{cor}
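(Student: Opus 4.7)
The plan is to derive this corollary directly from the cyclotomic-mapping special case of the AGW criterion already spelled out in the excerpt. Since $P(0)=0$ whenever $r\ge 1$, it suffices to show $P$ permutes $\fq^*$ iff the two stated conditions hold. I would apply Lemma~\ref{mainlemma} with $A=\fq^*$, $S=\bar S=\mu_\ell$, and $\lambda=\bar\lambda$ the $s$-th power map. Because $s\mid q-1$, the map $x\mapsto x^s$ is a surjection $\fq^*\twoheadrightarrow\mu_\ell$, so the hypotheses on $\lambda,\bar\lambda$ are met.

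Next I would identify the induced map $\bar P$ on $\mu_\ell$. Computing
\[ \bar\lambda(P(x)) = P(x)^s = x^{rs} f(x^s)^s = (x^s)^r f(x^s)^s, \]
the square commutes with $\bar P(y):=y^r f(y)^s$. A quick check gives $\bar P(\mu_\ell)\subseteq \mu_\ell\cup\{0\}$ because $f(y)^{s\ell}=f(y)^{q-1}=1$ for $f(y)\ne 0$; moreover $\bar P$ can be a bijection of $\mu_\ell$ only if $f$ does not vanish on $\mu_\ell$, so this technical hypothesis is absorbed into the bijectivity condition.

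Then I would analyze the fiber condition. Fix a generator $\gamma$ of $\fq^*$, set $\zeta=\gamma^s$, and note that the fibers of $\lambda$ are the cosets $C_j=\gamma^j\mu_s$ for $0\le j\le \ell-1$. On $C_j$, writing $x=\gamma^j y$ with $y\in\mu_s$ (so $y^s=1$), one computes
\[ P(\gamma^j y) = \gamma^{jr}\, f(\zeta^j)\, y^r. \]
Hence $P|_{C_j}$ is injective iff $y\mapsto y^r$ is injective on $\mu_s$, and requiring this for every $j$ is equivalent to $\gcd(r,s)=1$.

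Combining these two pieces via Lemma~\ref{mainlemma} gives exactly the claimed equivalence: $P$ permutes $\fq$ iff $\gcd(r,s)=1$ and $x^r f(x)^s$ permutes $\mu_\ell$. There is no real obstacle here, since the argument is essentially bookkeeping within the commutative square. The only mild subtleties are (a) handling $x=0$, which is trivial under the natural convention $r\ge 1$, and (b) ensuring $f$ does not vanish on $\mu_\ell$, which is automatic from $\bar P$ being a bijection.
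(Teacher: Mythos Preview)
Your proposal is correct and follows exactly the route the paper itself indicates: it specializes Lemma~\ref{mainlemma} with $A=\fq^*$, $\lambda=\bar\lambda=x^s$, and identifies $\bar P(y)=y^r f(y)^s$ on $\mu_\ell$, which is precisely the ``rewriting in terms of polynomials'' the paper alludes to after the cyclotomic-mapping diagram. Your handling of the two subtleties (the value at $0$ and the nonvanishing of $f$ on $\mu_\ell$) is also accurate, so nothing further is needed.
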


Using Corollary~\ref{lemma}, Akbary and Wang \cite{AW:07} first studied the permutation polynomials of the form $x^rf(x^s)$ for  arbitrary $\ell$ and  those polynomials $f$'s  satisfying that $f(\zeta)^s = \zeta^j$ for some $j$,  where $\zeta \in \mu_{\ell}$.  In particular, 

\begin{cor}\label{power}
Let $q-1=\ell s$.  Assume that $f(\zeta^i)^s =1$ for any $ i=0,\ldots, \ell -1$  and $\zeta \in \mu_{\ell}$ is a primitive $\ell$-th root of unity. Then  $P(x)=x^rf(x^s)$ is a PP of $\fq$  if and only if  $(r , q-1) = 1$.
\end{cor}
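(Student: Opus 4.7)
The plan is to deduce Corollary~\ref{power} directly from Corollary~\ref{lemma} by exploiting the hypothesis $f(\zeta^i)^s=1$ to trivialize the auxiliary map $\bar P$ on $\mu_\ell$. The statement asks for an ``iff'' characterization of when $P(x)=x^r f(x^s)$ is a PP of $\F_q$, so I will verify both directions simultaneously by collapsing the two coprimality conditions coming from Corollary~\ref{lemma} into a single one.

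First, I would recall that Corollary~\ref{lemma} tells us $P(x)=x^r f(x^s)$ is a PP of $\F_q$ if and only if (a) $(r,s)=1$ and (b) the map $\bar P(x)=x^r f(x)^s$ permutes $\mu_\ell$. Next I would use the hypothesis: since $\mu_\ell=\{1,\zeta,\zeta^2,\dots,\zeta^{\ell-1}\}$ and we are told $f(\zeta^i)^s=1$ for every $0\le i\le \ell-1$, the restriction of $\bar P$ to $\mu_\ell$ simplifies to $\bar P(x)=x^r$. Hence (b) is equivalent to the power map $x\mapsto x^r$ permuting the cyclic group $\mu_\ell$ of order $\ell$, which is in turn equivalent to $(r,\ell)=1$.

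Finally, I would combine the two conditions. The joint requirement $(r,s)=1$ and $(r,\ell)=1$ is equivalent to $(r,\ell s)=1$, that is, $(r,q-1)=1$, using $q-1=\ell s$ and the fact that an integer is coprime to a product precisely when it is coprime to each factor. This chain of equivalences gives the corollary.

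The proof involves no real obstacle: the only thing to be careful about is making sure that the hypothesis $f(\zeta^i)^s=1$ is used correctly (it must hold for \emph{every} element of $\mu_\ell$, not just for a primitive root $\zeta$), and that one does not forget the $(r,s)=1$ condition from Corollary~\ref{lemma} when passing to the final $(r,q-1)=1$ condition. The whole argument is essentially bookkeeping on top of Corollary~\ref{lemma}.
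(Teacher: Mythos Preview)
Your proposal is correct and follows exactly the approach the paper intends: the paper states Corollary~\ref{power} as an immediate consequence of Corollary~\ref{lemma} without spelling out a proof, and your derivation---reducing $\bar P$ to $x\mapsto x^r$ on $\mu_\ell$ via the hypothesis $f(\zeta^i)^s=1$, then combining $(r,s)=1$ and $(r,\ell)=1$ into $(r,q-1)=1$---is precisely the computation behind that reduction.
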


More classes of those permutation polynomials with small $\ell$ and special $f$ such that $f(x) = x^e +a^s$ or $f(x) =x^k + x^{k-1} + \cdots + x + 1$ were studied earlier in \cite{AAW:08, AW:05, AW:06, AW:07} and slightly extended in \cite{Zieve-2, Zieve:09}.  PPs of large indices was studied in \cite{Wang:13}.   For the intermediate indices,  Zieve first considered special $\ell = q-1$ or $\ell = q+1$ over finite field $\F_{q^2}$ \cite{Zieve:13}; see also in \cite{Hou:15a}.  We note that several recent papers deal with PPs with this type of indices, see for example \cite{LiQuChen:17, ZhaHuFan:17}.

In the study of CPP monomials, Wu et al  \cite{Wuetal1, Wuetal2} studied the CPP monomials $a^{-1} x^d$ over $\F_{p^{nk}}$  such that 
$d = \frac{p^{nk}-1}{p^k-1} + 1$.   For any $a \in \F_{p^{nk}}$, let $a_i = a^{p^{ik}}$, where $0\leq i \leq n-1$. Define 
\[
h_a(x) = x \prod_{i=0}^{n-1} (x+a_i). 
\]

Then  Corollary~\ref{lemma} directly gives the following

\begin{cor}\label{lemma2}
Let $d=\frac{p^{nk}-1}{p^k-1} + 1$.   Then $x^d + a x \in \F_{p^{nk}}[x]$ is a PP of $\F_{p^{n k}}$ if and only if $h_a(x) \in \F_{p^k}[x]$ is a PP of $\F_{p^k}$. 
\end{cor}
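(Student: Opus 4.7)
The plan is to apply Corollary~\ref{lemma} directly with $q$ replaced by $p^{nk}$ and a carefully chosen factorization of $d$. Set $\ell = p^k-1$ and $s = \frac{p^{nk}-1}{p^k-1}$, so that $\ell s = p^{nk}-1$ and $d = s+1$. Then I would rewrite
\[
x^d + ax \;=\; x\cdot\bigl(x^s + a\bigr) \;=\; x^{r} f(x^s)
\]
with $r=1$ and $f(y)=y+a$. Since $\gcd(r,s)=\gcd(1,s)=1$ holds trivially, Corollary~\ref{lemma} reduces the PP question to whether $g(x) := x\,(x+a)^s$ permutes the group $\mu_\ell$ of $\ell$-th roots of unity in $\F_{p^{nk}}$.

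The next key observation is that $\mu_\ell = \F_{p^k}^*$ because $\ell = p^k-1$. I then want to identify $g(x)$ with $h_a(x)$ on this set. For this I use the $p^k$-ary expansion
\[
s \;=\; 1 + p^k + p^{2k} + \cdots + p^{(n-1)k},
\]
so that for any $x\in\F_{p^k}$ (where Frobenius $y\mapsto y^{p^k}$ fixes $x$) one has
\[
(x+a)^s \;=\; \prod_{i=0}^{n-1} (x+a)^{p^{ik}} \;=\; \prod_{i=0}^{n-1}\bigl(x^{p^{ik}} + a^{p^{ik}}\bigr) \;=\; \prod_{i=0}^{n-1}(x + a_i).
\]
Multiplying by $x$ gives $g(x) = h_a(x)$ on $\F_{p^k}$. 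A brief Frobenius/symmetry remark shows $h_a(x)\in\F_{p^k}[x]$ since the cyclic action $a_i\mapsto a_{i+1}$ permutes the factors.

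Finally, I need to bridge ``permutes $\mu_\ell = \F_{p^k}^*$'' and ``permutes $\F_{p^k}$.'' Because $h_a(0)=0$, injectivity on $\F_{p^k}$ is equivalent to permuting $\F_{p^k}^*$: if $h_a$ permutes $\F_{p^k}^*$ then adjoining $h_a(0)=0$ extends it to a permutation of $\F_{p^k}$, and conversely a PP of $\F_{p^k}$ that fixes $0$ must permute $\F_{p^k}^*$. Combining the three steps yields the stated equivalence.

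There is no real obstacle here; the corollary is essentially a translation. The only substantive point to be careful about is the Frobenius expansion in the middle step, namely verifying that $(x+a)^s$ restricted to $\F_{p^k}$ coincides with $\prod_{i=0}^{n-1}(x+a_i)$, so that the reduced map $\bar P$ from Corollary~\ref{lemma} actually becomes the polynomial $h_a$ advertised in the statement.
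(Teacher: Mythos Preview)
Your proposal is correct and takes essentially the same approach as the paper, which simply states that Corollary~\ref{lemma} ``directly gives'' the result without spelling out details. You have carefully filled in exactly the steps the paper leaves implicit: the choice $\ell=p^k-1$, $s=(p^{nk}-1)/(p^k-1)$, the identification $\mu_\ell=\F_{p^k}^*$, the Frobenius expansion of $(x+a)^s$, and the $h_a(0)=0$ bridge between permuting $\F_{p^k}^*$ and $\F_{p^k}$.
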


In this case  $h_a(x) = x (x+a)^{d-1}$ is a  polynomial of lower degree over $\mu_{p^k-1}$ or $\F_{p^k}$.  When $n$ is small, essentially we need to study permutation polynomial of low degree over a subfield $\F_{p^k}$.   In \cite{BZ1, BZ2, Wuetal1, Wuetal2},  PPs of the form $f_a(x) = x^d + ax$ over $\fqn$ are thoroughly investigated for $n = 2,  3, 4$. For $n = 6$, sufficient conditions for $f_a(x)$  to be a PP of $\F_{q^6}$ are provided in \cite{Wuetal1, Wuetal2}  in the special cases of characteristic  $p\in \{2, 3, 5\}$, whereas in \cite{Bartolietal1} all $a$'s for which $ax^{\frac{q^6-1}{q-1}} +1$ is a CPP over $\F_{q^6}$ are explicitly listed.   The case $p = n+ 1$ is dealt with in \cite{Wuetal2, Maetal} as well.

 The following two conjectures are made in \cite{Wuetal2}.  

\begin{conj}[Conjecture 4.18 in \cite{Wuetal2}]\label{conj1}
Let $n+1$ be a prime such that $n+1 \neq p$. Let $(n, k)=1$ and $(n+1, p^2-1)=1$, and 
$d = \frac{p^{nk}-1}{p^k-1} + 1$.  Then there exists $a \in \F_{p^{nk}}^*$  such that $h_a(x)$ are Dickson polynoials of degree $n+1$ over $\F_{p^k}$. 
\end{conj}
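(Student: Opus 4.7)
The plan is to realize $h_a(x)$ literally as a Dickson polynomial $D_{n+1}(x,\alpha)$ for some $\alpha\in\mathbb{F}_{p^k}^*$. Since $n+1$ is an odd prime (for $n+1=2$ would contradict $\gcd(n+1,p^2-1)=1$), $n$ is even, and consequently $D_{n+1}(0,\alpha)=0$, so $D_{n+1}(x,\alpha)=x\cdot g_\alpha(x)$ with $\deg g_\alpha=n$. Using the classical substitution $D_{n+1}(y+\alpha/y,\alpha)=y^{n+1}+(\alpha/y)^{n+1}$, the $n$ non-zero roots of $D_{n+1}(x,\alpha)$ are parameterised as $\gamma(\zeta-\zeta^{-1})$ for $\zeta\in\mu_{2(n+1)}\setminus\{\pm 1\}$, modulo the identification $\zeta\sim-\zeta^{-1}$, where $\gamma^{2}=-\alpha$.

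First I would analyse the action of the $q$-power Frobenius ($q=p^k$) on these roots, splitting according to whether $-\alpha$ is a square in $\mathbb{F}_q$. When $\gamma\in\mathbb{F}_q$, Frobenius sends $\zeta$ to $\zeta^q$, and the pair-orbit of a primitive $\zeta$ has size $r:=\mathrm{ord}_{n+1}(q)$. When $\gamma\in\mathbb{F}_{q^2}\setminus\mathbb{F}_q$ (so $\gamma^q=-\gamma$), the induced action becomes $\zeta\mapsto-\zeta^q$; by iterating and tracking the parity of $r$, one shows the pair-orbit has size $2r$ when $r$ is odd, $r/2$ when $r\equiv 2\pmod 4$, and $r$ when $r\equiv 0\pmod 4$. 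The hypotheses $(n,k)=1$ and $(n+1,p^2-1)=1$ give $r=\mathrm{ord}_{n+1}(p)$, $r\mid n$, and $r\geq 3$.

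The heart of the argument is to select $\alpha\in\mathbb{F}_q^*$ so that the pair-orbit has size exactly $n$, forcing $g_\alpha(x)=D_{n+1}(x,\alpha)/x$ to be irreducible over $\mathbb{F}_q$ of degree $n$. Two clean sub-cases already cover many configurations: the square case yields orbit $=n$ precisely when $p$ is a primitive root modulo $n+1$ (so $r=n$), while the non-square case yields $2r=n$ exactly when $n\equiv 2\pmod 4$ and $r=n/2$ is odd. The remaining configurations need a finer analysis combining the two cases and exploiting both quadratic classes of $-\alpha$ in $\mathbb{F}_q^*$.

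Once such an $\alpha$ is obtained, I take $a\in\mathbb{F}_{p^{nk}}^*$ to be the negative of any root of the irreducible factor $g_\alpha$. The Galois conjugates $a_{0},a_{1},\ldots,a_{n-1}$ of $a$ over $\mathbb{F}_q$ are then exactly the negatives of the $n$ roots of $g_\alpha$, and consequently
\[
h_a(x)=x\prod_{i=0}^{n-1}(x+a_i)=x\,g_\alpha(x)=D_{n+1}(x,\alpha),
\]
as required. The main obstacle I foresee is the combinatorial choice of $\alpha$ producing a full orbit of size $n$: this is precisely where the hypothesis $\gcd(n+1,p^2-1)=1$ (forcing $r\geq 3$ and excluding small pair-orbits) becomes decisive, and where the case-by-case parity analysis of $r$ concentrates the genuine work.
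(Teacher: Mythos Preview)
Your strategy coincides with the paper's: choose $\alpha\in\mathbb{F}_q^{*}$ (with $q=p^k$) so that $D_{n+1}(x,\alpha)/x$ is irreducible over $\mathbb{F}_q$, and then take $-a$ to be any root. The paper reaches this same irreducibility question by quoting the Chou/Bhargava--Zieve factorization theorem for Dickson polynomials rather than computing Frobenius orbits directly as you do, but the content is identical, and your orbit computation is correct.

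The ``remaining configurations'' you postpone are not a matter of finer bookkeeping; they are a genuine obstruction. With $r=\mathrm{ord}_{n+1}(p)$, the two quadratic classes of $-\alpha$ yield pair-orbit sizes $r$ (square case) and one of $2r,\ r/2,\ r$ (non-square case, according to $r\bmod 4$), so the largest attainable orbit is $\max(r,2r)$. Hence an orbit of size $n$ is possible only when $r=n$, or when $r=n/2$ is odd. For any other divisor $r\mid n$ with $r\ge 3$---and the hypotheses permit this---no $\alpha\in\mathbb{F}_q^{*}$ makes $D_{n+1}(x,\alpha)/x$ irreducible. Since $h_a=D_{n+1}(\cdot,\alpha)$ forces the $n$ conjugates $-a_0,\ldots,-a_{n-1}$ to be the $n$ \emph{distinct} roots of $D_{n+1}(x,\alpha)/x$, no $a$ can exist in those cases and the conjecture itself fails there. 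A concrete witness is $p=3$, $k=1$, $n+1=13$: here $r=\mathrm{ord}_{13}(3)=3$, and $D_{13}(x,\alpha)/x$ splits over $\mathbb{F}_3$ into irreducibles of degree $3$ (for $\alpha=2$) or degree $6$ (for $\alpha=1$), never $12$. The paper's proof glosses over exactly this point by asserting $p^{n/2}\equiv -1\pmod{n+1}$ as if it followed from Fermat's little theorem; that claim is false in the example above, and the paper's computation of $m_d$ (hence of $N_d$) is valid only under the unstated extra hypothesis that $r=n$, i.e.\ that $p$ is a primitive root modulo $n+1$. So your honest acknowledgement of a gap is well placed: neither your argument nor the paper's covers the full range of parameters in the conjecture.
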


\begin{conj}[Conjecture 4.20 in \cite{Wuetal2}]\label{conj2}
Let $p$ be an odd prme. Let $n+1 = p$ and $d = \frac{p^{nk}-1}{p^k-1} + 1$. Then $a^{-1} x^d$ is a CPP over $\F_{p^{nk}}$, where $a\in \F_{p^{nk}}^*$ such that $a^{p^k-1} = -1$. 
\end{conj}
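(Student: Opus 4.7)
The plan is to reduce the CPP property of $a^{-1}x^d$ to a permutation statement about a low-degree polynomial over $\F_q := \F_{p^k}$, and then finish with a quadratic-character argument. Since $d = (q^n - 1)/(q-1) + 1$ and $n+1 = p$, a short computation gives $\gcd(d, q^n - 1) = \gcd(n+1, q-1) = \gcd(p, p^k - 1) = 1$, so $x^d$ (and hence $a^{-1}x^d$) already permutes $\F_{q^n}$. What remains is to show $a^{-1}x^d + x$ is a PP, equivalently (after multiplying by $a$) that $x^d + ax$ is. By Corollary~\ref{lemma2} this reduces to the question of whether the auxiliary polynomial $h_a(x) = x\prod_{i=0}^{n-1}(x + a^{q^i})$ permutes $\F_q$.

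The next step is to exploit $a^{q-1} = -1$. It forces $a^q = -a$ and $a^{q^2} = a$, so the Frobenius orbit of $a$ over $\F_q$ is exactly $\{a, -a\}$. As $i$ runs through $0, 1, \ldots, p-2$ the conjugates alternate, and since $p - 1$ is even this gives
\[
h_a(x) = x(x^2 - c)^{(p-1)/2}, \qquad c := a^2 \in \F_q^*.
\]
Moreover $c$ is a \emph{non-square} in $\F_q$: if $c = b^2$ with $b \in \F_q$, then $a = \pm b \in \F_q$, contradicting $a^{q-1} = -1 \neq 1$. The conjecture therefore reduces to showing that $h(x) = x(x^2 - c)^{(p-1)/2}$ permutes $\F_q$ whenever $c \in \F_q^*$ is a non-square.

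To prove this last claim I would apply Corollary~\ref{lemma} with $r = 1$, $s = 2$, $\ell = (q-1)/2$ and $f(u) = (u-c)^{(p-1)/2}$, so that $\mu_\ell = (\F_q^*)^2$. Since $\gcd(1,2) = 1$, it suffices to show that $\bar h(u) := u(u-c)^{p-1}$ permutes $(\F_q^*)^2$. Well-definedness is immediate: $u - c \neq 0$ on squares (as $c$ is a non-square) and $(u-c)^{p-1}$ is itself a square (as $p-1$ is even), so $\bar h$ maps $(\F_q^*)^2$ into itself. For injectivity, assuming $\bar h(u_1) = \bar h(u_2)$ with $u_1 \neq u_2$ in $(\F_q^*)^2$, I would cross-multiply and use the characteristic-$p$ identities $(u-c)^p = u^p - c^p$ and $u_1^p - u_2^p = (u_1 - u_2)^p$ to collapse the equation to
\[
\left(\frac{u_1 - u_2}{u_1 - c}\right)^{p-1} = \frac{c}{u_2},
\]
after cancelling the nonzero factors $u_1 - u_2$ and $u_1 - c$. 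The left-hand side is a $(p-1)$-th power in $\F_q^*$, hence a square; the right-hand side is a non-square divided by a square, hence a non-square---a contradiction. Bijectivity of $\bar h$ then follows from finiteness.

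The main obstacle is the algebraic simplification in the injectivity step: the raw equation $u_1(u_1-c)^{p-1} = u_2(u_2-c)^{p-1}$ does not obviously collapse to the clean identity above, and getting there requires a careful cross-multiplication, systematic use of the Frobenius identities, and the factorization $u_1^{p+1} - u_2^{p+1} = (u_1 - u_2)\bigl(u_1^p + u_2 (u_1 - u_2)^{p-1}\bigr)$, together with the two cancellations noted. Once the identity is in hand, the parity of $p - 1$ and the non-squareness of $c$ yield the contradiction at once, and Corollaries~\ref{lemma} and~\ref{lemma2} deliver the conjecture.
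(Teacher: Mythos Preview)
Your proposal is correct and follows the paper's overall strategy: reduce via Corollary~\ref{lemma2} to showing $h_a$ permutes $\F_q$, use $a^{q-1}=-1$ to obtain $h_a(x)=x(x^2-c)^{(p-1)/2}$ with $c=a^2$ a non-square, and finish with a quadratic-character contradiction. The only difference is in how the last permutation claim is established. The paper proves a general Lemma~\ref{exceptionalLemma} covering $x(x^n-c)^{(p-1)/n}$ for any $n\mid p-1$, applying the full AGW criterion with $\lambda(x)=x^n-c$; the reduced map is $\bar f(x)=(x+c)x^{p-1}$, and the substitution $y=1/x$ linearizes $\bar f(x)=b$ to $y^p-(c/b)y-1/b=0$, so two solutions would force $(y_1-y_2)^{p-1}=c/b$, impossible since $c/b$ is not an $n$-th power. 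You instead use the multiplicative criterion (Corollary~\ref{lemma}) with $s=2$, land on $\bar h(u)=u(u-c)^{p-1}$ over the squares, and derive the analogous identity $\bigl((u_1-u_2)/(u_1-c)\bigr)^{p-1}=c/u_2$ by direct expansion. Your route avoids the extra substitution at the cost of a longer algebraic manipulation; the paper's route is shorter at this step and simultaneously yields the generalization to all $n\mid p-1$ (and hence Theorem~3), which your specialization to $n=2$ does not.
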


Using the classification results of exceptional polynomials,  Bartoli et al \cite{Bartolietal2} classified complete permutation monomials of degree $\frac{q^n-1}{q-1} + 1$ over the finite field with $q^n$ elements in odd characteristic,
for $n +1$ a prime and $(n + 1)^4 < q$. As a corollary, Conjecture~\ref{conj1}  was proven in odd characteristic. However,   Conjecture~\ref{conj1} is still open in general.  We note Conjecture~\ref{conj2}   is  proven recently in \cite{Maetal} for $n+1$ prime.  However, when $n+1$ is large or not prime, the classification of CPP monomials  is still open. For example, when $n+1$ is a power of primes such as $8$ or $9$,  only a few new examples of CPPs are provided in \cite{Bartolietal2}.   In this paper, we construct several new classes of CPPs using AGW criterion,  which confirm both conjectures.  In Section 1, we use a factorization result of Dickson polynomial to construct explicitly a new class of CPPs and prove Conjecture~\ref{conj1}. 
In comparision to the proof in \cite{Bartolietal2},  our result is more explicit although our result is not a classification result.  However our proof is elementary and it does not assume that $(n+1)^4 < p^k$. 
 In Section 2, we derived a few new classes of CPPs for general $n$ such that $n\mid p-1$ or $n\mid p^k-1$ respectively. This covers the case of Conjecture~\ref{conj2} and also gives  new classes of CPPs with large $n$.  We also demonstrated a usage of AGW criterion in proving a well known class of degree $p$ exceptional polynomials. 

\section{CPPs induced from Dickson polynomials}

Let us consider Dickson polynomial of the first kind $D_{n}(x, b)$ of degree $n$ over $\F_q$. It is well known that 
 $D_{n+1}(x, b) = D_{n+1} (y + b/y, b) = y^{n+1} + (b/y)^{n+1}$ when we let $x = y+b/y$ for some $y\in \F_{q^2}$.  Therefore $w = u+ \frac{b }{u}$ is a root of $D_{n+1}(x, b)$ if and only if $u^{n+1} +  \frac{b^{n+1}}{u^{n+1}} =0$. Equivalently, $u$ is a solution to $u^{2(n+1)} = - b^{n+1}$.  If $c^2 = b$ and
$\zeta$ is a primitive $4(n+1)$-th root of unity, then $u$ is of the form $c \zeta^{2i+1}$ for $0\leq i \leq n-1$.  Therefore all the roots of $D_{n+1}(x, b)$ are $ c (\zeta^{2i+1} + \zeta^{-(2i+1)})$ for $0\leq i \leq n$. Moreover,  the explicit factorization for Dickson polynomial of the first kind was studied earlier in \cite{Chou} and \cite{BhargavaZieve}. See also Theorem 9.6.12 in \cite{HFF} as follows:

\begin{thm}\label{factor}  If $q$ is odd and $a\in \fq^*$ then $D_n(x,a)$ is the product of irreducible polynomials over $\F_q$ which occur in cliques corresponding to the divisors $d$ of $n$ for which $n/d$ is odd. 
Let $m_d$ is the least positive integer satisfying $q^{m_d}\equiv \pm 1\pmod{4d}$.
To each such $d$ there corresponds $\phi(4d)/(2N_d)$ irreducible factors of degree $N_d$, each of which has the form 
$$\prod_{i=0}^{N_d-1}(x-\sqrt{a^{q^i}}(\zeta^{q^i}+\zeta^{-q^i}))$$
where $\zeta$ is a primitive $4d$-th root of unity and
$$N_d=\left\{ \begin{array}{ll}
m_d/2 & \mbox{if $\sqrt{a}\notin \F_q , m_d \equiv 2 ~(\bmod{~ 4})$ and $q^{m_d/2} \equiv 2d\pm 1~(\bmod{~ 4d})$},\\
2m_d & \mbox{if $\sqrt{a}\notin \F_q$ and $m_d$ is odd},\\
m_d & \mbox{otherwise}. 
             \end{array}
             \right..$$ 
\end{thm}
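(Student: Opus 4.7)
The plan is to use the classical parametrization $D_n(y + a/y, a) = y^n + a^n/y^n$. Writing $a = c^2$ with $c \in \F_{q^2}$ and setting $\zeta = y/c$, the roots of $D_n(x, a)$ are exactly the elements $c(\zeta + \zeta^{-1})$ with $\zeta^{2n} = -1$, subject to the identifications $\zeta \sim \zeta^{-1}$ and $(c,\zeta) \sim (-c,-\zeta)$.

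First I would identify the cliques. A short order calculation shows that $\zeta^{2n} = -1$ holds precisely when the multiplicative order of $\zeta$ equals $4d$ for some divisor $d$ of $n$ with $n/d$ odd. So the roots partition naturally by the choice of $d$, and each clique contains $\phi(4d)/2$ distinct elements. The elementary identity $\sum_{d\mid n,\, n/d \text{ odd}} \phi(4d)/2 = n$ confirms the count.

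Next I would decompose each clique into Frobenius orbits. Applying $\alpha \mapsto \alpha^q$ to $\alpha = c(\zeta + \zeta^{-1})$ yields $\alpha^q = c^q(\zeta^q + \zeta^{-q})$. When $\sqrt{a} \in \F_q$, we have $c^q = c$ and the parameter transforms as $\zeta \mapsto \zeta^q$; the orbit size modulo $\zeta \sim \zeta^{-1}$ is the least $k$ with $q^k \equiv \pm 1 \pmod{4d}$, namely $k = m_d$. When $\sqrt{a} \notin \F_q$, we have $c^q = -c$, and absorbing the sign into $\zeta$ gives the iterated rule $\zeta \mapsto -\zeta^q$, whose $k$-th iterate is $(-1)^k \zeta^{q^k}$. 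The orbit closes as soon as $(-1)^k \zeta^{q^k} \equiv \zeta^{\pm 1}$, i.e.\ either $k$ is even with $q^k \equiv \pm 1 \pmod{4d}$, or $k$ is odd with $q^k \equiv 2d \pm 1 \pmod{4d}$.

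The combinatorial heart of the argument is then to determine the minimal such $k$. The key observation is that any $k$ satisfying $q^k \equiv 2d \pm 1 \pmod{4d}$ squares to $q^{2k} \equiv 1 \pmod{4d}$, forcing $m_d \mid 2k$. Splitting by the $2$-adic valuation of $m_d$: if $m_d$ is odd, no odd $k \leq m_d$ qualifies and the minimum is the even value $2m_d$; if $m_d \equiv 2 \pmod 4$, the odd value $k = m_d/2$ is a valid orbit length precisely when $q^{m_d/2} \equiv 2d \pm 1 \pmod{4d}$, giving $N_d = m_d/2$, and otherwise $N_d = m_d$; if $m_d \equiv 0 \pmod 4$, no odd $k < m_d$ works and $N_d = m_d$. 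These subcases match the stated formula. Each orbit yields the irreducible factor $\prod_{i=0}^{N_d-1}(x - \alpha^{q^i})$, and since $c^{q^i}$ serves as a consistent choice of $\sqrt{a^{q^i}}$, the identity $\alpha^{q^i} = c^{q^i}(\zeta^{q^i} + \zeta^{-q^i})$ reproduces the displayed product, with $\phi(4d)/(2N_d)$ factors per clique. I expect the main obstacle to be the bookkeeping in the non-square case: one must carefully track how the Frobenius-induced sign flip on $c$ is transferred into the parameter $\zeta$, and then distill the correct minimal orbit length from the three parity subcases.
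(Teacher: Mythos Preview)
The paper does not prove this theorem at all: it is quoted verbatim as a known result, with attribution to Chou, Bhargava--Zieve, and Theorem~9.6.12 of the Handbook of Finite Fields, and is then used as a black box in the proof of Theorem~2. So there is no ``paper's own proof'' to compare against.

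That said, your sketch is essentially the standard argument from the cited sources and is correct in outline. The parametrization of the roots as $c(\zeta+\zeta^{-1})$ with $\zeta$ of order $4d$, $d\mid n$, $n/d$ odd, is right; your Frobenius analysis is accurate, including the key point that when $\sqrt{a}\notin\F_q$ the sign flip $c^q=-c$ converts the closure condition for odd $k$ into $q^k\equiv 2d\pm 1\pmod{4d}$; and your case split on the $2$-adic valuation of $m_d$ correctly recovers the three displayed values of $N_d$. One small remark: in the $m_d\equiv 2\pmod 4$ subcase you implicitly use that the set $\{2d+1,2d-1\}$ is closed under negation modulo $4d$, which is what guarantees that if $k=m_d/2$ fails then every odd multiple of $m_d/2$ fails as well; it would be worth stating this explicitly. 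Otherwise the bookkeeping you flagged as the main obstacle is handled cleanly.
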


We use this factorization result to construct a class of CPP as follows:

\begin{thm}\label{thm1}
Let $p$ be an odd prime,  $k$ be an odd positive integer, and  $n+1$ be an odd prime number such that $(n, k)=1$ and $(n+1, p^2-1) =1$.   Let
$d = \frac{p^{nk}-1}{p^k-1} + 1$,  $b\in \F_{p^k}^*$, and  $c\in \F_{p^{2k}}$ such that $b=c^2$. Let $a=c(\zeta + \zeta^{-1})$ and $\zeta$ is a primitive  $4(n+1)$-th  roots of unity in  $\F_{p^{nk}}$.  Then 

(i) If  $p\equiv 1\pmod 4$,  or $p \equiv 3 \pmod{4}$ and $n/2$  is even, then $a^{-1} x^d$ is a CPP over $\F_{p^{nk}}$. 

(ii) If $p \equiv 3 \pmod{4}$ and $n/2$  is odd,  then $a^{-1} x^d$ is a CPP over $\F_{p^{nk}}$ for all $c \in \F_{p^{2k}} \setminus  \F_{p^k}$ and $b=c^2 \in \F_{p^k}$. 
\end{thm}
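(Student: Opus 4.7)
The plan is to apply Corollary \ref{lemma2} to reduce CPP-ness of $a^{-1}x^d$ over $\F_{p^{nk}}$ to a permutation condition on the polynomial $h_a$ over $\F_{p^k}$, and then to identify $h_a$ with the Dickson polynomial $D_{n+1}(x,b)$ so that the classical Dickson permutation criterion finishes the job. Write $q = p^k$. I first verify that $a^{-1}x^d$ itself is a permutation of $\F_{q^n}$, which amounts to $\gcd(d, q^n-1) = 1$; a direct calculation reduces this to $\gcd(n+1, q-1) = 1$. Let $f$ denote the multiplicative order of $p$ modulo $n+1$. From $(n+1, p^2-1) = 1$ and $n+1$ prime, we have $f \geq 3$, and since $f \mid n$ with $(n,k) = 1$, it follows that $f \nmid k$, so $q \not\equiv 1 \pmod{n+1}$. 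The parallel argument gives $f \nmid 2k$, hence $\gcd(n+1, q^2-1) = 1$, a fact needed at the end.

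By Corollary \ref{lemma2}, $x^d + ax$ permutes $\F_{q^n}$ iff $h_a(x) = x\prod_{i=0}^{n-1}(x + a^{q^i}) \in \F_q[x]$ permutes $\F_q$. The key structural fact is that $a = c(\zeta+\zeta^{-1})$ is itself a root of $D_{n+1}(x,b)$: via the substitution $x = y + b/y$, $D_{n+1}(x,b) = y^{n+1} + b^{n+1}/y^{n+1}$, so the $n+1$ roots are $c(\zeta^j+\zeta^{-j})$ for $j$ odd modulo $4(n+1)$, with $j = n+1$ giving the zero root. Because $n+1$ is odd, $D_{n+1}(-x,b) = -D_{n+1}(x,b)$; hence $-a$ is also a root, and so are all its Frobenius conjugates over $\F_q$. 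Therefore $h_a(x)$ is, up to multiplicities dictated by $[\F_q(a):\F_q]$, a product of linear factors of $D_{n+1}(x,b)$, and the goal becomes to match $h_a$ with $D_{n+1}(\cdot,b)$ either as polynomials or (when the Frobenius orbit of $a$ is strictly smaller than $n$) as functions on $\F_q$.

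I invoke Theorem \ref{factor} to describe the factorization of $D_{n+1}(x,b)$ over $\F_q$. Since $n+1$ is prime, only the divisors $\delta = 1$ and $\delta = n+1$ of $n+1$ contribute, giving the linear factor $x$ and then $n/N_{n+1}$ irreducible factors of degree $N_{n+1}$; the quantity $N_{n+1}$ is controlled by $m_{n+1}$ (the least $m$ with $q^m \equiv \pm 1 \pmod{4(n+1)}$), by whether $\sqrt{b}\in\F_q$, and by a congruence on $q^{m_{n+1}/2}$. The case analysis now runs as follows. When $p \equiv 1 \pmod 4$ (part of case (i)), $q\equiv 1 \pmod 4$ so $q^m \not\equiv -1 \pmod 4$ for any $m$, pinning $m_{n+1}$ down in terms of $f$ and delivering the required matching of $h_a$ with $D_{n+1}(\cdot,b)$ for any allowable $c$. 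When $p\equiv 3\pmod 4$ and $n/2$ is even (the rest of case (i)), $q \equiv 3 \pmod 4$ with $4 \mid n$, and the ``otherwise'' sub-case of Theorem \ref{factor} again yields the right $N_{n+1}$. In case (ii), $p\equiv 3 \pmod 4$ and $n/2$ is odd: choosing $c \in \F_q$ would push us into a wrong sub-case where the matching fails, but the constraint $c\notin\F_q$ places us in the sub-case ``$\sqrt{b}\notin\F_q$ and $m_{n+1}$ odd'' of Theorem \ref{factor}, where $N_{n+1} = 2m_{n+1}$ and the matching goes through.

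Once $h_a$ coincides with $D_{n+1}(\cdot,b)$ on $\F_q$, the classical result that $D_{n+1}(x,b)$ is a PP of $\F_q$ if and only if $\gcd(n+1, q^2-1) = 1$, together with the verification of this gcd condition above, concludes that $h_a$ permutes $\F_q$, and thereby that $a^{-1}x^d$ is a CPP of $\F_{p^{nk}}$. The principal obstacle is the third paragraph's case analysis: organizing the Frobenius orbit of $a$ and the factorization of $D_{n+1}(x,b)$ given by Theorem \ref{factor} so that the combinatorics of their roots align in each parity regime, and in particular recognizing that the constraint $c\notin\F_q$ in case (ii) is precisely what one needs to land in the ``$m_{n+1}$ odd'' sub-case of Theorem \ref{factor}.
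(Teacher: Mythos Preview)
Your overall strategy is exactly the paper's: use Corollary~\ref{lemma2} to reduce to showing that $h_a(x)=x\prod_{i=0}^{n-1}(x+a^{q^i})$ permutes $\F_q$, identify $h_a$ with the Dickson polynomial $D_{n+1}(x,b)$ via Theorem~\ref{factor}, and then invoke $(n+1,q^2-1)=1$. The paper carries this out by asserting $p^{n/2}\equiv -1\pmod{n+1}$, deducing $m_{n+1}=n$ in case~(i) (respectively $m_{n+1}=n/2$ in case~(ii)), and reading off from Theorem~\ref{factor} that $D_{n+1}(x,b)=x\prod_{i=0}^{n-1}\bigl(x-\sqrt{b^{q^i}}(\zeta^{q^i}+\zeta^{-q^i})\bigr)$, which is then visibly $h_a(x)$.

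The gap in your sketch lies in the third paragraph, and the paper's argument shares it. You rightly allow the multiplicative order $f$ of $p$ modulo $n+1$ to be a proper divisor of $n$ and note that the $\F_q$-Frobenius orbit of $a$ then has size $s<n$; but you then assert without justification that $h_a$ still matches $D_{n+1}(\cdot,b)$ ``as functions on $\F_q$''. In fact when $s<n$ one has $h_a(x)=x\,g(x)^{n/s}$ for a \emph{single} irreducible factor $g$ of $D_{n+1}(x,b)/x$, whereas $D_{n+1}(x,b)/x$ is the product of $n/s$ \emph{distinct} such factors; for instance with $p=37$, $k=1$, $n+1=7$ one gets $f=3$, and the resulting monic degree-$7$ polynomials are unequal, hence (as $7<37$) disagree already on $\F_{37}$. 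The paper never confronts this because of its claim $p^{n/2}\equiv -1\pmod{n+1}$, which would force $f\nmid n/2$; but that claim does not follow from the stated hypotheses (the same example gives $37^{3}\equiv 1\pmod 7$). So the identification $h_a=D_{n+1}(x,b)$ is only secured when $q$ has full order $n$ modulo $n+1$, and both your plan and the paper's proof need either a separate argument for the case $f<n$ or an added primitivity hypothesis on $p$.
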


\begin{proof}

Let $q=p^k$ and $m_d$ be the smallest positive integer such that $q^{m_d} \equiv \pm 1 \pmod{4(n+1)}$.  By Fermat's little theorem, we obtain $p^{n} \equiv 1 \pmod{n+1}$ and $p^{n/2} \equiv - 1\pmod{n+1}$.  Because $(n, k) =1$, we must have $k$ odd. If $p\equiv 1\pmod 4$, then $q=p^k \equiv 1 \pmod{4}$. Therefore the smallest positive integer $m_d$ such that $q^{m_d} \equiv \pm 1 \pmod{4(n+1)}$ is $m_d = n$.  If $p \equiv 3 \pmod{4}$, then $q \equiv -1 \pmod{4}$ because $k$ is odd.  Furthermore, if $n/2$ is odd, then   the smallest positive integer $m_d$ such that $q^{m_d} \equiv \pm 1 \pmod{4(n+1)}$ is $m_d = n/2$. If   $n/2$ is even, then   the smallest positive integer $m_d$ such that $q^{m_d} \equiv \pm 1 \pmod{4(n+1)}$ is $m_d = n$. In any case,  $a^{p^{nk}}=c^{p^{nk}} (\zeta^{p^{nk}} + \zeta^{-p^{nk}})= c(\zeta + \zeta^{-1}) = a $ and thus $a \in \F_{p^{nk}}$. 

If   $p\equiv 1\pmod 4$,  or $p \equiv 3 \pmod{4}$ and $n/2$  is even,  then $m_d =n$ and thus by Theorem~\ref{factor} we obtain, for any $b \in \fq^*$, 
$$D_{n+1}(x, b) =x \prod_{i=0}^{n-1}(x-\sqrt{b^{q^i}}(\zeta^{q^i}+\zeta^{-q^i})). $$
If $p \equiv 3 \pmod{4}$ and $n/2$  is odd, then $m_d = n/2$. In this case, let $b = c^2$ such that $c \in \F_{q^2} \setminus \fq$. Then 
$$D_{n+1}(x, b) = x\prod_{i=0}^{n-1}(x-\sqrt{b^{q^i}}(\zeta^{q^i}+\zeta^{-q^i})). $$

In order to prove $x^d + a x$ is a permutation polynomial over $\F_{p^{nk}}$,   by Corollary~\ref{lemma},   $x^d + a x$ is a PP of $\F_{p^{nk}}$ if and only if  $x(x+a)^{d-1}$  permutes $\F_{p^k}^*$. 
We note that 
\begin{eqnarray*}
x(x+a)^{d-1} &=& x(x+a)^{1 + p^k + \cdots + p^{(n-1)k}} \\
&=& x(x+a) (x+a)^{p^k}  \cdots (x+a)^{p^{(n-1)k}} 
\end{eqnarray*}

Therefore $x(x+a)^{d-1} = D_{n+1}(x, b)$ under both assumptions.   Because $(n, k)=1$ and $(n+1, p^2-1)$, it is straightforward to show that $(n+1, p^{2k} -1) =1$. Therefore  $D_{n+1}(x, b)$  permutes $\F_{p^k}^*$. Hence we complete the proof. 
\end{proof}

We remark Theorem~\ref{thm1}  confirms Conjecture~\ref{conj1} by providing explicit choices of $a$.  Our result is more explicit and it does not assume that $(n+1)^4 < p^k$ which is the case in \cite{Bartolietal2}.

 \section{CPP exponent $d=\frac{q^{n}-1}{q-1}$ with more general $n$}

  In this section,  we  first use AGW criterion to give an elementary proof for the exceptionality of the class of degree $p$ exceptional polynomials over $\F_{p^m}$ studied by Fried, Guralnick and Saxl \cite{FGS}.  
Use this class of exceptional polynomials, we derive a new class of complete permutation monomials $a^{-1}x^d$  over $\F_{q^{r}}$ with $d= \frac{q^{p-1}-1}{q-1} + 1$, $q=p^k$,  $a \in \mathbb{F}_{q^n}$ and 
$a^{q-1} \in \mu_n \setminus\{1\} $ where $n \mid p-1$. This  confirms and extends Conjecture~\ref{conj2}.  Similarly, we drive a class of complete permutation monomials over $\F_{q^{n}}$ with exponent $d=\frac{q^{q-1}-1}{q-1} + 1$ and $n\mid q-1$. This provides more examples of CPPs with bigger values of  $n$ while previous studies  mostly deal with small $n$'s or $n=p-1$.    Finally a sufficient and necessary description for CPPs with $d=\frac{p^{nk}-1}{p^k-1}$ is provided. In particular, a simple class of such CPPs is given as a corollary.

\begin{lemma}\label{exceptionalLemma}
Let $p$ be an odd prime, $i\geq 0$, $m, s$ be positive integers such that $ n \mid p-1$. If $c\in \F_{p^m}$ such that $c^{(p^m-1)/n} \neq 1$, then $f(x) = x (x^n - c)^{\frac{p-1}{n}}$ is a permutation polynomial over $\F_{p^m}$. In particular, it is an exceptional polynomial over $\F_{p^m}$ for $m \geq 4$. 
\end{lemma}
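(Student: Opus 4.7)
The plan is to apply Corollary~\ref{lemma} with $r = 1$, $s = n$, and $\ell = (p^m-1)/n$, thereby reducing the question to whether the auxiliary map $\bar P(y) := y(y-c)^{p-1}$ permutes $\mu_\ell$. Writing $f(x) = x\cdot h(x^n)$ with $h(y) = (y-c)^{(p-1)/n}$, the coprimality $(1,n)=1$ is automatic, so what remains is to understand $\bar P$. I would first verify that $\bar P$ sends $\mu_\ell$ into itself: the hypothesis $c^{(p^m-1)/n}\neq 1$ says exactly that $c\notin \mu_\ell$, hence $y-c\neq 0$ for every $y\in\mu_\ell$; and since $n\mid p-1$, the subgroup of $(p-1)$-st powers $\mu_{(p^m-1)/(p-1)}$ lies inside $\mu_\ell$, so $(y-c)^{p-1}\in\mu_\ell$ and therefore $\bar P(y)\in\mu_\ell$.

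The crux is injectivity of $\bar P$ on $\mu_\ell$. I would assume $\bar P(y_1)=\bar P(y_2)$ with $y_1\neq y_2$ and derive a contradiction. Setting $u:=(y_2-c)/(y_1-c)\in\F_{p^m}^*\setminus\{1\}$ and dividing the defining equation by $(y_1-c)^{p-1}$ gives $y_1=u^{p-1}y_2$. Plugging this into $y_2-c=u(y_1-c)$ and simplifying leads to
\[
y_2\bigl(1+u+u^2+\cdots+u^{p-1}\bigr)=c.
\]
The decisive characteristic-$p$ identity is $(u-1)^p = u^p-1$, from which
\[
1+u+\cdots+u^{p-1}=\frac{u^p-1}{u-1}=(u-1)^{p-1},
\]
so $y_2 = c/(u-1)^{p-1}$. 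Raising to the $\ell$-th power,
\[
y_2^\ell=\frac{c^\ell}{(u-1)^{(p-1)\ell}}.
\]
Since $n\mid p-1$, the exponent $(p-1)\ell=(p-1)(p^m-1)/n$ is a positive integer multiple of $p^m-1$, so $(u-1)^{(p-1)\ell}=1$; hence $y_2^\ell=c^\ell$. Now $y_2\in\mu_\ell$ forces $c^\ell=1$, contradicting the hypothesis. Therefore $\bar P$ is injective on the finite set $\mu_\ell$, hence bijective, and Corollary~\ref{lemma} gives that $f$ is a PP of $\F_{p^m}$.

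For the exceptionality assertion, I would invoke the classical criterion stated in the introduction: a PP of degree at most $q^{1/4}$ over $\F_q$ is exceptional. Here $\deg f=p\leq (p^m)^{1/4}$ precisely when $m\geq 4$, so $f$ is exceptional over $\F_{p^m}$ in that range.

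The step requiring real insight---and, in my view, the main obstacle if one is not looking for it---is the recognition that the cyclotomic-type sum $1+u+\cdots+u^{p-1}$ collapses to $(u-1)^{p-1}$ in characteristic $p$. Once this identity surfaces, the condition $c^{(p^m-1)/n}\neq 1$ aligns perfectly with the exponent $(p-1)\ell$ being an integer multiple of $p^m-1$, and the contradiction is immediate; without it, the route from $\bar P(y_1)=\bar P(y_2)$ to a clean obstruction is not transparent.
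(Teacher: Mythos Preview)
Your argument is correct. It differs from the paper's proof in a meaningful way, though the underlying characteristic-$p$ mechanism is the same.

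The paper applies the general AGW criterion (Lemma~\ref{mainlemma}) with $\lambda(x)=x^{n}-c$ and $\bar\lambda(x)=x^{n}$, so the reduced map is $\bar f(x)=(x+c)x^{p-1}=x^{p}+cx^{p-1}$ acting between the sets $S=\{x^{n}-c:x\in\F_{p^{m}}^{*}\}$ and $\bar S=\{x^{n}:x\in\F_{p^{m}}^{*}\}$. To prove injectivity the paper passes to the reciprocal $y=1/x$, obtaining the $\F_{p}$-linearized form $y^{p}-(c/b)y-1/b$; two distinct roots would force $(y_{1}-y_{2})^{p-1}=c/b$, which is impossible since $b$ is an $n$-th power while $c$ is not. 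You instead invoke the multiplicative special case (Corollary~\ref{lemma}) with $\lambda(x)=x^{n}$, landing on $\bar P(y)=y(y-c)^{p-1}$ restricted to the subgroup $\mu_{\ell}$, and your injectivity argument isolates $y_{2}=c/(u-1)^{p-1}$ via the identity $1+u+\cdots+u^{p-1}=(u-1)^{p-1}$, then contradicts $c^{\ell}\neq 1$ directly. Both routes ultimately exploit $(u-1)^{p}=u^{p}-1$; the paper packages this as $p$-linearity after an inversion, whereas you keep everything multiplicative and inside $\mu_{\ell}$. Your approach has the advantage of staying within the concrete Corollary~\ref{lemma} framework used elsewhere in the paper and of making the role of the hypothesis $c^{(p^{m}-1)/n}\neq 1$ fully transparent at the final step; the paper's approach illustrates the flexibility of the general AGW diagram with $\lambda\neq\bar\lambda$. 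The exceptionality remark is handled identically in both.
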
 
 
 \begin{proof}
 It is clear that $f(x) =0$ if and only if $x =0$. Let $A = \F_{p^m}^*$,  $\lambda(x) = x^n -c$ and $\bar{\lambda} (x) = x^n$, $S = \lambda(A)$, $\bar{S} = \bar{\lambda}(A)$, and $\bar{f} (x) =  (x+c)x^{p-1}$. To show $f$ is a PP of $\F_{p^m}$, by AGW criterion, we need to show that  $f$ is injective on $\lambda^{-1}(s)$ for each $s \in S$ and  $\bar{f}$ is bijective. 
 
 For each $s\in S$, we have $\lambda^{-1}(s)$ is the set of all the distinct roots $\eta_1, \ldots, \eta_n$ of $x^n = s+c$. Hence $f$ is injective on it because $f(\eta_i) = \eta_i s^{\frac{p-1}{n}}$. 
 
 To prove $\bar{f}$ is bijective from $S$ to $\bar{S}$,  is is enough to  show that $\bar{f}$ is surjective.   Let $b \in \bar{S}$ such that $\bar{f}(x)  = (x+c)x^{p-1} = x^p + cx^{p-1} = b$.  Then $x^p + cx^{p-1} = b$ has at least two distinct solutions if and only if 
 $y^p - \frac{c}{b} y -\frac{1}{b}$ has at least two distinct solutions.  Assume $y_1, y_2$ are two distinct solutions in $S$. Then $y_1 -y_2$ is a root of $y^p - \frac{c}{b} y =0$ and thus a root of $y^{p-1} = \frac{c}{b}$, which is a contradiction because $n\mid p-1$, $b \in \bar{S}$, and $c$ is not an $n$-th power.  Therefore there is at most one solution in $S$. Because $|S|=|\bar{S}|$, we conclude $\bar{f}$ is a bijection. Moreover, $f$ is an exceptional polynomial  over $\mathbb{F}_{p^m}$ for $m \geq 4$ because the degree of $f$ is $p$. Hence we complete the proof. 
 \end{proof}

\begin{thm}
Let $p$ be an odd prime, $i\geq 0$, $k, n$ be positive integers such that $n \mid p-1$.   Let
$d = \frac{p^{(p-1)k}-1}{p^k-1} + 1$ and  $a\in \F_{p^{rk}}$ such that $a^{p^k-1} \in \mu_{n}$, where $\mu_{n}$ is the set of all $n$-th roots of unity in $\F_{p^{nk}}$.  If  $a^{p^k-1} \in \mu_{n} \setminus \{1\}$, then  $a^{-1} x^d$ is a CPP over $\F_{p^{nk}}$.
\end{thm}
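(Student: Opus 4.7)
The plan is to follow the two-step AGW strategy used throughout this paper: first reduce the CPP condition to a permutation statement on the subfield $\F_q$ via Corollary~\ref{lemma2}, then recognize that the resulting polynomial lies in the exceptional-polynomial family furnished by Lemma~\ref{exceptionalLemma}. Writing $q = p^k$ and $s = (q^{p-1}-1)/(q-1)$, we have $d = s+1$, and since $\gcd(s+1,s)=1$ and $\gcd(s+1,q-1)=\gcd(p,q-1)=1$, we get $\gcd(d,q^{p-1}-1)=1$; thus $x^d$, and hence $a^{-1}x^d$, always permutes $\F_{q^{p-1}}$. Only the second half of the CPP condition, namely that $x^d+ax$ is a PP of $\F_{q^{p-1}}$, requires work, and by Corollary~\ref{lemma2} this amounts to showing that
$$h_a(x) \;=\; x \prod_{i=0}^{p-2}\bigl(x+a^{q^i}\bigr)$$
permutes $\F_q$.

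Next, set $\omega = a^{q-1}\in\mu_n\setminus\{1\}$. Since $n\mid p-1\mid q-1$, we have $\omega\in\F_q$, so $\omega$ is fixed by the $q$-Frobenius and a one-line induction yields $a^{q^i}=\omega^i a$. Let $m>1$ be the order of $\omega$ and write $p-1=mt$. Because the exponents $i=0,1,\ldots,p-2$ cover each residue modulo $m$ exactly $t$ times, the product splits as
$$\prod_{i=0}^{p-2}(x+\omega^i a) \;=\; \Bigl(\prod_{j=0}^{m-1}(x+\omega^j a)\Bigr)^{t}.$$
Substituting $u=x/a$ and using the identity $\prod_{j=0}^{m-1}(u+\omega^j)=u^m-(-1)^m$ (all the $-\omega^j$ being the $m$ distinct roots of $u^m=(-1)^m$) then collapses $h_a$ into
$$h_a(x) \;=\; x\bigl(x^m-c\bigr)^{(p-1)/m},\qquad c=(-1)^m a^m,$$
and a short Frobenius check using $\omega^m=1$ confirms $c\in\F_q$.

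At this point $h_a(x)$ is exactly in the shape required by Lemma~\ref{exceptionalLemma}, applied with degree parameter equal to $m$ (a divisor of $p-1$) and ambient field $\F_q=\F_{p^k}$. The remaining hypothesis is $c^{(q-1)/m}\neq 1$, which is immediate: since $q-1$ is even, $c^{(q-1)/m}=(-1)^{q-1}a^{q-1}=\omega\neq 1$. Hence Lemma~\ref{exceptionalLemma} gives that $h_a$ permutes $\F_q$, which by Corollary~\ref{lemma2} yields that $x^d+ax$ permutes $\F_{q^{p-1}}$, completing the proof that $a^{-1}x^d$ is a CPP. The only delicate point I anticipate is bookkeeping the $(-1)^m$ signs so that the product folds cleanly into $x^m-(-1)^m a^m$ and the final identity $c^{(q-1)/m}=\omega$ emerges without stray factors; every other step is a direct citation of results already in hand.
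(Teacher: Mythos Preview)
Your proof is correct and follows the same two-step approach as the paper: reduce to $h_a$ permuting $\F_q$ via Corollary~\ref{lemma2}, collapse $h_a$ into the form $x(x^m-c)^{(p-1)/m}$, and then invoke Lemma~\ref{exceptionalLemma}. Your treatment is in fact slightly tidier than the paper's, since you track the actual order $m$ of $\omega=a^{q-1}$ and handle the sign $(-1)^m$ uniformly, whereas the paper splits into the cases $\omega=-1$ and $\omega\in\mu_n\setminus\{1,-1\}$ and tacitly assumes $\omega$ is a primitive $n$-th root of unity when simplifying to $x(x^n+a^n)^{(p-1)/n}$.
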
 

\begin{proof}
By Corollary~\ref{lemma} or Corollary~\ref{lemma2}, we need to prove $ h_a(x) = x(x + a)^{d-1}$ is a PP of  $\mu_{p^{k}-1}$ (equivalently, a PP of  $\F_{p^k}$).   If  $a^{p^k-1} \in \mu_{n} \setminus \{1\}$, then $a^{p^k} = a \zeta$ for some $\zeta \neq 1 \in \mu_{n}$.  Then  $a^{p^{ik}} = a \zeta^{i}$ and 
\begin{eqnarray*}
h_a(x)  &=& x(x+a)^{d-1}\\
&=& x(x+a)^{1 + p^k + \cdots + p^{(p-1)k}} \\
&=& x(x+a) (x+a)^{p^k}  \cdots (x+a)^{p^{(p-2)k}} \\
&=& x(x+a) (x+a^{p^k})  \cdots (x+a^{p^{(p-2)k}}) \\
&=& x \left( (x+a)(x+a\zeta) \cdots (x+a\zeta^{n-1}) \right)^{(p-1)/n}
\end{eqnarray*}

If $a^{p^k-1} = -1$, then $h_a(x) = x(x^2-a^2)^{(p-1)/2}$. We note that $a^{p^k-1} =-1$ implies that $a^2 \in \F_{p^k}^*$ but $a\not\in \F_{p^k}$.   By Lemma~\ref{exceptionalLemma},  $h_a(x)$ is a PP of $\F_{p^k}$ and thus $a^{-1} x^d$ is a CPP over $\F_{p^{nk}}$ in this case. 

 If   $a^{p^k-1} \in \mu_{n} \setminus \{1, -1\}$, then $h_a(x) = x (x^n + a^n)^{(p-1)/n}$. 
We note that $a^{p^k-1} \in \mu_{n} \setminus \{1\}$ implies that $a^n \in \F_{p^k}^*$ but $a\not\in \F_{p^k}$.   If  $(p^k-1)/n$ is even, then  $(-a^n)^{(p^k-1)/n } = a^{p^k-1} \neq 1$.  If $(p^k-1)/n$ is odd, then  $(-a^n)^{(p^k-1)/n} = - a^{p^k-1} \neq 1$.  Hence $-a^n$ is not an $n$-th power in $\F_{p^k}$.   By Lemma~\ref{exceptionalLemma},  $h_a(x)$ is a PP of $\F_{p^k}$ and thus we complete the proof. 
\end{proof}

We note that Conjecture 4.20 in   \cite{Wuetal2} was confirmed by Theorem 3.2 \cite{Maetal}. It  is a special case  when $n=p-1$ and $a^{p^k-1} = -1$.  Our result  deals with more general $n$ such that $n \mid p-1$ and  more general $a$'s such that $a^{p^k-1} \in \mu_n \setminus \{1, -1\}$ when $n > 2$.  Next we extend the result for more general $n$'s.

\begin{thm}
Let $p$ be an odd prime, $k, n$ be positive integers such that $n \mid p^k-1$.   Let
$d = \frac{p^{(p^{k}-1) k}-1}{p^k-1} + 1$ and $\ell n = p^{k}-1$. Let  $\mu_{n}$ be the set of  all $n$-th  roots of unity in $\F_{p^{nk}}$.   If  $a^{p^k-1} \in \mu_{n} \setminus \{1\}$, then  $a^{-1} x^d$ is a CPP over $\F_{p^{nk}}$.

\end{thm}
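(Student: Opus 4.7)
The plan is to follow the same reduction strategy as the previous theorem, but now the inner polynomial $h_a$ has a larger degree (equal to $p^k$ rather than $p$), so Lemma~\ref{exceptionalLemma} no longer applies directly; instead I would re-invoke Corollary~\ref{lemma} (Park--Lee/Wang/Zieve) a second time, at the level of $\F_q$, to collapse the problem to a trivial condition on a single element.

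\textbf{Step 1 (Reduction via Corollary~\ref{lemma2}).} Set $q=p^k$. Since $(d,q^{q-1}-1)=1$, the monomial $x^d$ is automatically a PP, so $a^{-1}x^d$ is a CPP if and only if $x^d+ax$ is a PP. By Corollary~\ref{lemma2} (applied with the role of $n$ in that corollary played by $q-1$), this is equivalent to
\[
h_a(x) \;=\; x\prod_{i=0}^{q-2}\bigl(x+a^{q^{i}}\bigr)
\]
being a PP of $\F_q$.

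\textbf{Step 2 (Rewrite the product using $\zeta=a^{q-1}$).} Write $\zeta:=a^{q-1}\in\mu_n\setminus\{1\}$ and let $n'=\mathrm{ord}(\zeta)$, so $n'\mid n\mid q-1$ and $n'>1$. Because $q\equiv 1\pmod{n'}$, the identity
\[
a^{q^{i}} \;=\; a\cdot \zeta^{(q^{i}-1)/(q-1)} \;=\; a\,\zeta^{\,1+q+\cdots+q^{i-1}} \;=\; a\,\zeta^{i}
\]
holds. As $i$ runs through $0,1,\dots,q-2$, the exponent $\zeta^{i}$ cycles with period $n'$ and each value $\zeta^{0},\dots,\zeta^{n'-1}$ occurs exactly $(q-1)/n'$ times. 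Using $\prod_{j=0}^{n'-1}(x+a\zeta^{j})=x^{n'}-(-a)^{n'}$, I would then obtain
\[
h_a(x) \;=\; x\Bigl(x^{n'}-(-a)^{n'}\Bigr)^{(q-1)/n'}.
\]
Note $(-a)^{n'}\in\F_q$ since $((-a)^{n'})^{q}=(-a)^{n'}\zeta^{n'}=(-a)^{n'}$.

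\textbf{Step 3 (Second application of Corollary~\ref{lemma}).} Write $h_a(x)=x\,f(x^{n'})$ with $f(y)=(y-c')^{(q-1)/n'}$ and $c':=(-a)^{n'}$. Setting $r=1$, $s=n'$, $\ell=(q-1)/n'$ in Corollary~\ref{lemma}, the condition $(r,s)=1$ is trivial, and $h_a$ permutes $\F_q$ iff the map
\[
x\longmapsto x\,(x-c')^{q-1}
\]
permutes $\mu_\ell$. For $x\in\mu_\ell\subset\F_q^{*}$, the factor $(x-c')^{q-1}$ equals $1$ whenever $x\neq c'$ and equals $0$ when $x=c'$; hence the map is the identity on $\mu_\ell\setminus\{c'\}$ and is a permutation of $\mu_\ell$ precisely when $c'\notin\mu_\ell$, i.e.\ when $(c')^{\ell}\neq 1$.

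\textbf{Step 4 (Translate back to the hypothesis).} Compute
\[
(c')^{\ell} \;=\; \bigl((-a)^{n'}\bigr)^{(q-1)/n'} \;=\; (-a)^{q-1} \;=\; a^{q-1},
\]
since $q$ odd makes $(-1)^{q-1}=1$. The hypothesis $a^{p^k-1}\in\mu_n\setminus\{1\}$ is exactly $a^{q-1}\neq 1$, so $c'\notin\mu_\ell$, and the proof concludes.

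The only mildly delicate point is the combinatorial collapse in Step~2, where one must verify that the actual order $n'$ of $\zeta$ (not $n$ itself) governs the grouping of factors; once this is in place the rest is essentially formal. The key conceptual point — and what makes the whole scheme go through for \emph{every} $n\mid q-1$, not just for $n\mid p-1$ — is that after the cyclotomic reduction the quantity one must control is $(c')^\ell=a^{q-1}$, which depends only on the coset of $a$ modulo $\F_q^{*}$ and not on $n'$.
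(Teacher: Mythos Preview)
Your proof is correct and follows essentially the same route as the paper: reduce via Corollary~\ref{lemma2} to showing $h_a$ permutes $\F_{p^k}$, factor $h_a(x)=x\bigl(x^{n'}-(-a)^{n'}\bigr)^{(q-1)/n'}$ using $a^{q^i}=a\zeta^i$, and then apply the Park--Lee/Wang/Zieve criterion once more to reduce to $(-a)^{q-1}\neq 1$. The paper invokes Corollary~\ref{power} directly (splitting into the cases $\zeta=-1$ and $\zeta\in\mu_n\setminus\{1,-1\}$), whereas you re-derive the same conclusion from Corollary~\ref{lemma} in a single unified step; your use of the actual order $n'$ of $\zeta$ rather than $n$ is also a bit more careful than the paper, but the underlying argument is the same.
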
 

\begin{proof}
Let $a^{p^k} = a \zeta$ for some $\zeta \neq 1 \in \mu_{n}$. Then  $a^{p^{ik}} = a \zeta^{i}$ and 
\begin{eqnarray*}
h_a(x)  &=& x(x+a)^{d-1}\\
&=& x(x+a)^{1 + p^k + \cdots + p^{(p^k-2)k}} \\
&=& x(x+a) (x+a)^{p^k}  \cdots (x+a)^{p^{(p^k-2)k}} \\
&=& x(x+a) (x+a^{p^k})  \cdots (x+a^{p^{(p^k-2)k}}) \\
&=& x \left( (x+a)(x+a\zeta) \cdots (x+a\zeta^{n-1}) \right)^{(p^k-1)/n}
\end{eqnarray*}

If $a^{p^k-1} = -1$, then $h_a(x) = x(x^2-a^2)^{(p^k-1)/2}$. We note that $a^{p^k-1} =-1$ implies that $a^2 \in \F_{p^k}^*$ but $a\not\in \F_{p^k}$.  For any $\frac{p^k-1}{2}$-th root of unity $\xi$, we have $(\xi-a^2)^{p^k-1} =1$.  
By Corollary~\ref{power},   $h_a(x)$ is a PP of $\F_{p^k}$ and thus $a^{-1} x^d$ is a CPP over $\F_{p^{nk}}$.  Similarly, 
 If   $a^{p^k-1} \in \mu_{n} \setminus \{1, -1\}$, then $h_a(x) = x (x^n + a^n)^{(p^k-1)/n}$. 
We note that $a^{p^k-1} \in \mu_{n} \setminus \{1\}$ implies that $a^n \in \F_{p^k}^*$ but $a\not\in \F_{p^k}$.  Therefore $x^n+a^n =0$ has no solution in $\F_{p^k}$.  Hence $(\xi +a^n)^{p^k-1} - 1$ for all $\xi \in \mu_{\ell}$.     By Corollary~\ref{power},  $h_a(x)$ is a PP of $\F_{p^k}$ and thus the result holds.
\end{proof}

The following description of CPP monomials is obvious for more general $n \mid p^k-1$. 

\begin{thm}\label{thm5}
Let $p$ be an odd prime,  $n \geq 3, k$ be positive integers such that $n \mid p^{k}-1$.   Let
$d = \frac{p^{nk}-1}{p^k-1} + 1$. Let  $a^{p^k-1}$ be a primitive $n$-th root of unity in $\F_{p^{nk}}$. Then  $a^{-1} x^d$ is a CPP over $\F_{p^{nk}}$ if and only if  
\[
  [(a^n + c^i)(a^n+c^j)^{-1}]^{r} \neq  c^{j-i} ~ for~ all~ 0\leq i < j < (p^k-1)/n
\]
where $c$ is a fixed primitive $(\frac{p^k-1}{n})$-th root of unity in $\F_{p^{k}}$. 
\end{thm}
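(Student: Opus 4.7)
The plan is to apply the two-step reduction already used throughout this section. The first step is Corollary~\ref{lemma2}: $a^{-1}x^d$ is a CPP over $\F_{p^{nk}}$ if and only if $h_a(x) = x(x+a)^{d-1}$ permutes $\F_{p^k}$. Expanding the exponent $d-1 = 1 + p^k + \cdots + p^{(n-1)k}$ via the Frobenius identity on $\F_{p^k}$ and using $a^{p^{ik}} = a\zeta^i$ (where $\zeta := a^{p^k-1}$ has exact order $n$ and lies in $\F_{p^k}$ since $n\mid p^k-1$), I would simplify
\[
h_a(x) \;=\; x\prod_{i=0}^{n-1}(x+a\zeta^i) \;=\; x(x^n + A), \qquad A := (-1)^{n+1}a^n \in \F_{p^k}^{*},
\]
so that $h_a$ has the shape $x\cdot f(x^n)$ with $f(y) = y + A$; note $A\in\F_{p^k}$ because $(a^n)^{p^k-1} = \zeta^n = 1$.

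The second step is Corollary~\ref{lemma} applied with $r=1$, $s=n$, and $\ell = (p^k-1)/n$. The coprimality condition $\gcd(1,n)=1$ is automatic, so the corollary reduces the problem to deciding when $\bar{P}(x) := x(x+A)^n$ permutes $\mu_\ell = \{1,c,\ldots,c^{\ell-1}\}$. A short check shows $A^\ell = (-1)^{(n+1)\ell}\zeta\ne 1$ (since $\zeta$ has order $n\ge 3$, hence $\zeta\ne \pm 1$), so $A\notin\mu_\ell$; therefore $c^i + A\ne 0$ for every $i$, and $\bar{P}(c^i)^\ell = (c^i+A)^{p^k-1}=1$, which means $\bar{P}$ maps $\mu_\ell$ into itself. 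Since $\mu_\ell$ is finite, permutation is equivalent to injectivity, and unwinding $\bar{P}(c^i)\ne \bar{P}(c^j)$ and dividing yields
\[
\Bigl(\frac{c^i + A}{c^j + A}\Bigr)^{\!n} \;\ne\; c^{j-i}, \qquad 0 \le i < j < \ell,
\]
which is the advertised inequality.

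I expect the only real bookkeeping obstacle to be reconciling the sign $(-1)^{n+1}$ hidden in $A$ with the form ``$a^n + c^i$'' appearing in the statement: for $n$ odd one has $A = a^n$ and the two expressions agree verbatim, while for $n$ even a routine reindexing of $\mu_\ell$ (equivalently, an adjustment of the chosen primitive root $c$) is needed. Once this minor sign matching is handled, both directions of the equivalence drop out at once from Corollary~\ref{lemma}, with the exponent $r$ displayed in the statement playing the role of $s=n$ coming from the AGW reduction.
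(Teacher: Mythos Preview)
Your proposal is correct and follows essentially the same route as the paper's proof. The paper writes $h_a(x)=x(x^n+a^n)$ (glossing over the sign $(-1)^{n+1}$ you carefully track for even $n$) and then simply quotes the classical permutation-binomial criterion (Exercise~7.11 in Lidl--Niederreiter) in place of your explicit second application of Corollary~\ref{lemma}; your verification that $(-A)^\ell=\zeta\ne 1$ is exactly the paper's remark that the side condition $(-a^n)^{(p^k-1)/n}\ne 1$ holds automatically when $n\ge 3$.
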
 
\begin{proof}
Similarly as above,  $h_a(x) = x(x^n + a^n)$.  It is well known that $h_a(x)=  x(x^n + a^n)$ is a PP of 
$\F_{p^k}$ if and only if   $(-a^n)^{(p^k-1)/n}  \neq 1$ and  
\[
  [(a^n + c^i)(a^n+c^j)^{-1}]^{n} \neq  c^{j-i} ~ for~ all~ 0\leq i < j < (p^k-1)/n
\]
where $c$ is a fixed primitive $(\frac{p^k-1}{n})$-th root of unity in $\F_{p^{k}}$ (see for example, Excercise 7.11 in \cite{LN:97}) . Because  $a^{p^k-1}$ is a primitive $n$-th root of unity in $\F_{p^{nk}}$ and $n\geq 3$,  $(-a)^{(p^k-1)/n} \neq 1$ always holds. 

\end{proof}

Finally we obtain the following  class of CPP monomials. 
\begin{cor}
Let $p$ be an odd prime, $n \geq 3$, $k$ be a positive integer such that $n \ell =p^k-1$.   Let
$d = \frac{p^{nk}-1}{p^k-1} + 1$. Let  $a^{p^k-1}$ be a primitive $n$-th root of unity in $\F_{p^{nk}}$.
 If there exist an integer $\lambda$ such that $(z +a^n/z)^n =z^{\lambda}$ for every $2\ell $-th root of unity $z$, then  $a^{-1} x^d$ is a CPP over $\F_{p^{nk}}$ if and only if $(2+n+\lambda, 2\ell) \leq 2$. 

\end{cor}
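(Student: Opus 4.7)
The plan is to apply Theorem~\ref{thm5} and massage its cyclotomic inequality, via the hypothesis, into the asserted gcd condition. Theorem~\ref{thm5} tells us that $a^{-1}x^d$ is a CPP over $\F_{p^{nk}}$ precisely when
\[
[(a^n+c^i)(a^n+c^j)^{-1}]^{n}\neq c^{j-i}
\]
for all $0\le i<j<\ell$, where $c$ is a fixed primitive $\ell$-th root of unity in $\F_{p^k}$. The auxiliary requirement $(-a^n)^\ell\neq 1$ coming from the proof of Theorem~\ref{thm5} is automatic here: $(-a^n)^\ell=(-1)^\ell a^{p^k-1}=(-1)^\ell\zeta$ with $\zeta=a^{p^k-1}$ a primitive $n$-th root of unity, and for $n\ge 3$ this is never $1$.

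Next I would unpack the hypothesis. Multiplying through by $z^n$, $(z+a^n/z)^n=z^\lambda$ rewrites as $(z^2+a^n)^n=z^{n+\lambda}$ for every $z\in\mu_{2\ell}$. Comparing the instances $z$ and $-z$ forces $(-1)^{n+\lambda}=1$, so $n+\lambda$ is even. Substituting $w=z^2$ (the squaring map $\mu_{2\ell}\to\mu_\ell$ is surjective) collapses the identity to
\[
(w+a^n)^n=w^{(n+\lambda)/2}\qquad\text{for every }w\in\mu_\ell,
\]
with $(n+\lambda)/2$ now a bona fide integer exponent. Specializing to $w=c^i$ yields $(a^n+c^i)^n=c^{i(n+\lambda)/2}$ for each $i\in\{0,1,\ldots,\ell-1\}$.

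Plugging this into the Theorem~\ref{thm5} inequality gives
\[
[(a^n+c^i)(a^n+c^j)^{-1}]^{n}=c^{(i-j)(n+\lambda)/2},
\]
so the condition $\neq c^{j-i}$ becomes $c^{(j-i)(n+\lambda+2)/2}\neq 1$. Since $c$ has order $\ell$, this is equivalent to $2\ell\nmid t(n+\lambda+2)$ for every $1\le t\le \ell-1$. The smallest positive $t$ annihilated by $n+\lambda+2$ modulo $2\ell$ is $t_{0}=2\ell/\gcd(2\ell,n+\lambda+2)$; requiring $t_{0}\ge \ell$ is exactly $\gcd(n+\lambda+2,2\ell)\le 2$, which is the stated condition.

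The step I expect to be trickiest is the parity bookkeeping: verifying that $n+\lambda$ must be even before one is entitled to write $(n+\lambda)/2$, and then accurately tracking the factor of $2$ introduced by descending from $\mu_{2\ell}$ to $\mu_\ell$. This is precisely the mechanism that converts the natural order-$\ell$ cyclotomic test into the order-$2\ell$ gcd condition appearing in the corollary. All the remaining work is elementary arithmetic in the cyclic group $\langle c\rangle$.
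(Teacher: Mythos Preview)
Your argument is correct. It differs from the paper's proof mainly in organization: the paper does not route through the explicit inequalities of Theorem~\ref{thm5} but instead applies Corollary~\ref{lemma} once more to $h_a(x)=x(x^n+a^n)$, reducing the question to whether $g(x)=x(x+a^n)^n$ permutes $\mu_\ell$, and then observes that $g(z^2)=z^{2+n}(z+a^n/z)^n=z^{2+n+\lambda}$ for $z\in\mu_{2\ell}$, so injectivity of $g$ on $\mu_{2\ell}/\mu_2$ is exactly $(2+n+\lambda,2\ell)\le 2$. You instead \emph{descend} the hypothesis from $\mu_{2\ell}$ to $\mu_\ell$ (via your parity observation that $n+\lambda$ is even and the surjectivity of squaring), obtaining $(w+a^n)^n=w^{(n+\lambda)/2}$ on $\mu_\ell$, and then feed this into the coordinate-wise criterion of Theorem~\ref{thm5}. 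Both routes are short; the paper's lift to $\mu_{2\ell}$ avoids the separate parity check and keeps the gcd condition with modulus $2\ell$ visible from the start, while your version makes explicit why the factor of $2$ appears and connects cleanly to the Lidl--Niederreiter binomial test underlying Theorem~\ref{thm5}.
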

\begin{proof}
As in Theorem~\ref{thm5}, we need to prove $h_a(x) = x(x^n + a^n)$ is a PP of $\F_{p^k}$.  We use Corollary~\ref{lemma} again,  $h_a(x) = x(x^n + a^n)$ is a PP of  $\F_{p^k}$  if and only if 
$g(x) = x(x+a^n)^{n}$  permute $\mu_{\ell}$, the set of all $\ell$-th roots of unity.   Obviously, $(-a^n)^{(p^k-1)/n} \neq 1$.  Then  $g(x) = x(x+a^n)^{n}$  permute $\mu_{\ell}$ if and only if $g(x^2)=x^2(x^2+a^n)^{n}$  is injective on $\mu_{2 \ell}/\mu_{2}$.  For any $z\in \mu_{2\ell}$, we must have 
$g(z^2)=z^2(z^2+a^n)^{n} = z^{2+n} (z+ a^n/z)^n = z^{2+n+\lambda}$. Therefore $g(x^2)=x^2(x^2+a^n)^{n}$  is injective on $\mu_{2 \ell}/\mu_{2}$ if and only if $(2+n+\lambda, 2\ell)\leq 2$. 
\end{proof}

\end{document}